%% file: main.tex
\documentclass[11pt,reqno]{amsart}
\input{library}

\numberwithin{equation}{section}

\title[Fractal uncertainty principle over $\mathbb Q_p$]
{Fractal uncertainty principle over $\mathbb Q_p$}
\author[V. Badalucco]{Valentino Badalucco}
\address{
    Département de Mathématiques et Applications, École Normale Supérieure, Université PSL, 45, rue d'Ulm 75005 Paris, France
}
\email{valentino.badalucco@ens.fr}
\author[L. Franchi]{Leonardo Franchi}
\address{
Department of Pure Mathematics and Mathematical Statistics,
Centre for Mathematical Sciences,
Wilberforce Road,
Cambridge CB3 0WA,
United Kingdom
}
\email{lf511@cam.ac.uk}

\begin{document}

\begin{abstract}
We prove a fractal uncertainty principle over $\mathbb Q_p$ for porous sets, resolving a conjecture of Cohen.
\end{abstract}

\maketitle
\vspace{-0.8cm}

\input{sections/introduction}

\input{sections/reduction-to-finite}
\input{sections/proof-of-finite}

\printbibliography

\end{document}

%% file: library.tex
\usepackage{amsfonts}
\usepackage{amsmath}
\usepackage{amssymb, esint}
\usepackage{amsthm,color}
\usepackage{enumerate}
\usepackage{mathtools}
\usepackage{enumitem}
\usepackage{url}
\usepackage[hidelinks, bookmarksdepth=3]{hyperref}
\usepackage{xcolor}
\usepackage{cancel}
\usepackage{ulem}
\usepackage{caption,subcaption}
\usepackage{tikz}
\usepackage{tikz-3dplot}
\usepackage[T1]{fontenc}
\usepackage{mathrsfs}
\usepackage{commath}

\usepackage{longtable}
\newcommand{\F}{\mathbb{F}}
\usepackage{soul}
\usepackage[
style=alphabetic,
backend=biber,
maxbibnames=99,
minbibnames=99,
maxcitenames=99,
mincitenames=99
]{biblatex}
\AtBeginDocument{
  \setlength{\abovedisplayskip}{9pt plus 2pt minus 3pt}
  \setlength{\belowdisplayskip}{9pt plus 2pt minus 3pt}
  \setlength{\abovedisplayshortskip}{6pt plus 2pt minus 2pt}
  \setlength{\belowdisplayshortskip}{6pt plus 2pt minus 2pt}
  \setlength{\jot}{4pt}
}

\DeclareFieldFormat[article]{journaltitle}{#1}
\DeclareFieldFormat[book]{title}{#1}

\tdplotsetmaincoords{70}{110}

\tdplotsetmaincoords{70}{110}

\usepackage[a4paper,margin=2.8cm]{geometry}

\newcommand{\Z}{{\mathbb Z}}

\def\0{{\mathbf 0}}

\newcommand{\supp}{\operatorname{supp}}

\renewcommand{\F}{\mathcal{F}}

\newcommand{\defeq}{\vcentcolon=}

\newcommand{\one}{\mathbf{1}}

\theoremstyle{plain}
\newtheorem{thm}{Theorem}[section]

\newtheorem{cor}[thm]{Corollary}
\newtheorem{lem}[thm]{Lemma}
\newtheorem{prop}[thm]{Proposition}

\newcommand{\thistheoremnames}{}
\newtheorem*{genericthms}{\thistheoremnames}
\newenvironment{para*}[1]
  {\renewcommand{\thistheoremnames}{#1}%
   \begin{genericthms}}
  {\end{genericthms}}

\newtheorem{defn}[thm]{Definition}

\theoremstyle{remark}

\newtheorem*{claim*}{Claim}

%% file: sections/introduction.tex
\section{Introduction}
A fractal uncertainty principle says, roughly, that a function and its Fourier
transform cannot both be concentrated near fractal sets.  Since the work of
Bourgain and Dyatlov~\cite{BourgainDyatlov2018}, this principle has become a
basic tool in quantum chaos and related problems.  In particular, it has been
used to prove lower bounds for the mass of eigenfunctions on hyperbolic
surfaces~\cite{DyatlovJin2018,DyatlovJinNonnenmacher2022}, exponential decay
for damped waves~\cite{Jin2020}, and spectral gaps for open quantum
systems~\cite{DyatlovZahl2016,DyatlovZworski2020}; see also the survey
\cite{Dyatlov2019}.

An interesting feature of the one-dimensional Bourgain--Dyatlov theorem is that a
porosity condition is sufficient to obtain a fractal uncertainty principle.
Recall that if $(\Omega,d)$ is a metric space, then a set
$E\subset\Omega$ is called $\nu$-porous between the scales $\alpha_0$ and
$\alpha_1$ if, for every ball $B$ of diameter
$R\in(\alpha_0,\alpha_1)$, there exists $x\in B$ such that
$B(x,\nu R)\cap E=\varnothing.$  With this terminology, the Bourgain--Dyatlov theorem
takes the following form.

\begin{thm}[Bourgain--Dyatlov]\label{thm:bd-real-fup}
Let $\nu>0$.  There exist constants $C=C(\nu)<\infty$ and
$\beta=\beta(\nu)>0$ such that the following holds.  Let $0<h<1$.  Suppose
that $X\subset[-1,1]$ is $\nu$-porous from scales $h$ to $1$, and that
$Y\subset[-h^{-1},h^{-1}]$ is $\nu$-porous from scales $1$ to $h^{-1}$.
Then, for every $f\in L^2(\mathbb R)$,
$$
    \supp\widehat f\subseteq Y
    \quad\Longrightarrow\quad
    \|\one_Xf\|_{L^2(\mathbb R)}
    \le Ch^\beta\|f\|_{L^2(\mathbb R)}.
$$
\end{thm}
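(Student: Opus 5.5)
This is the Bourgain--Dyatlov theorem, quoted from \cite{BourgainDyatlov2018}, so I would follow their strategy: turn porosity of $Y$ into a Beurling--Malliavin-type damping multiplier, and then run an induction on scales over $X$.

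\textbf{Step 1: reduction to a single-scale estimate.} The goal is to find $c=c(\nu)\in(0,1)$ and a scale ratio $L=L(\nu)$ with the following property. For every $f$ with $\supp\widehat f\subseteq Y$ and every interval $I$ of length $\sim L^{-k}$ (for $h\lesssim L^{-k}\le 1$), the mass of $f$ on $X\cap I$ should be at most $(1-c)$ times the mass of $f$ on a fixed enlargement of $I$, up to tails that decay rapidly away from $I$. Write $X_k$ for the $L^{-k}$-neighbourhood of $X$. Porosity of $X$ gives, inside each interval of length $L^{-k}$, a gap of proportional size that $X_{k+1}$ avoids. Iterating the single-scale estimate through the $K\sim\log(1/h)/\log L$ scales then gives
\[
\|\one_Xf\|_{L^2}\le C(1-c)^K\|f\|_{L^2}=Ch^\beta\|f\|_{L^2}.
\]
The tails are handled by Schur-type almost-orthogonality.

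\textbf{Step 2: the damping function.} Porosity of $Y$ on scales $1$ to $h^{-1}$ implies a weighted bound of the form
\[
\int \frac{\log(1+ d(\xi,Y)^{-1})}{1+\xi^2}\,d\xi<\infty,
\]
or, more precisely, a version of it that is uniform across scales. The Beurling--Malliavin multiplier theorem then produces a function $\psi$ with the following properties:
\begin{itemize}
\item $\widehat\psi$ is compactly supported in a small ball;
\item $|\psi(\xi)|\le \exp\bigl(-c\,\theta(\xi)\bigr)$, where $\theta$ grows on the part of frequency space lying near $Y$, i.e.\ $\psi$ is very small on the fattened $Y$;
\item $|\psi|\ge c$ on a unit region.
\end{itemize}
Swapping the roles of space and frequency, this gives a compactly supported bump $\varphi=\widehat\psi$. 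Then $f*\varphi$ is essentially localized in space, while its Fourier transform $\widehat f\,\psi$ is damped off $Y$. The needed form is the statement that a function frequency-supported in $Y$ cannot vanish on a whole interval of length $\sim 1$ at scale-$1$ localization. Concretely: if $\supp\widehat f\subseteq Y$, then for every unit interval $J$ and every subinterval $J'\subset J$ of length $\nu$,
\[
\|f\|_{L^2(J')}\ge c\,\|f\|_{L^2(J)}-\text{(tails)}.
\]
This is the unique-continuation estimate. I would prove it by contradiction plus compactness, using the multiplier to control the family of such $f$, which is not compact in an obvious way.

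\textbf{Step 3: rescaling.} Porosity of $Y$ at every scale between $1$ and $h^{-1}$ gives the Step 2 estimate at every scale $L^{-k}$ simultaneously. Feeding this into Step 1, at each scale the gap in $X$ carries a definite fraction of the mass of $f$, which yields the factor $(1-c)$.

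The main obstacle is Step 2. One has to construct the multiplier with constants uniform in $h$, using only porosity rather than Ahlfors regularity. I would first attempt it by building the weight $\omega=\sum_k \one_{\text{scale }k\text{ neighbourhood of }Y}$, checking that it satisfies the Beurling--Malliavin hypotheses (a Lipschitz bound on $\log$ and a finite Poisson-type integral) through porosity counting, and then applying the theorem.
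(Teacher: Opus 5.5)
The paper does not prove this theorem. It quotes it from Bourgain--Dyatlov as background, so your plan has to stand on its own. The architecture is the right one: a damping multiplier built from the porosity of $Y$, a one-scale unique continuation estimate, and induction on scales over $X$. But Step~2, which you correctly call the crux, is not carried out, and the concrete ingredients you propose for it cannot work.

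\emph{Step~2.} Three problems.
\begin{itemize}
\item The condition $\int\log(1+d(\xi,Y)^{-1})(1+\xi^2)^{-1}\,d\xi<\infty$ fails whenever the statement has content. If $Y$ has measure zero, then $\supp\widehat f\subseteq Y$ forces $f=0$. If $|Y|>0$, the integrand is $+\infty$ on $Y$.
\item The weight $\sum_k\one_{\{\text{scale-}k\text{ neighbourhood of }Y\}}$ counts scales, so on $Y$ it is only logarithmic in $|\xi|$. Damping by $(2+|\xi|)^{-C}$ is already supplied by the Fourier transform of any smooth bump, for every $Y$. This multiplier therefore carries no information about porosity. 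It cannot separate porous $Y$ from $Y=[-h^{-1},h^{-1}]$, for which your unique continuation estimate fails uniformly in $h$ (take an $h$-scale wave packet in $J\setminus J'$).
\item For the multiplier to see $Y$, the damping on $Y$ must exceed what is admissible on all of $\mathbb{R}$, and porosity does allow this. Porosity gives the covering bound: $Y\cap[R,2R]$ is covered by $\lesssim(R/w)^{\delta}$ intervals of length $w$, with $\delta(\nu)<1$. Slope-one tents of height $\sim R/\log R$ over $Y\cap[R,2R]$ then cost only $\lesssim(\log R)^{\delta-2}$ in the logarithmic integral. This gives an admissible weight that is $\gtrsim|\xi|/\log(2+|\xi|)$ on $Y$, even though that function has divergent logarithmic integral on $\mathbb{R}$.
\end{itemize}
Even with such a weight, your contradiction--compactness argument does not close. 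Compactness fails because Fourier mass of $f_j$ escapes to $|\xi|\to\infty$ inside $Y$ as $h_j\to0$. That is exactly the part the multiplier kills, so $f_j*\varphi$ can tend to $0$ and no contradiction results. You need an argument that controls $f$ itself, including its high-frequency part, not just $f*\varphi$.

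\emph{Step~1.} Your one-scale contraction has the form ``mass on $X_{k+1}\cap I$ $\le(1-c)\cdot{}$mass on a fixed enlargement of $I$, plus tails.'' This does not iterate, for two reasons.
\begin{itemize}
\item The enlargements overlap, so summing over $I$ can multiply the right side by more than $(1-c)^{-1}$.
\item Tails bounded by $\|f\|$, which is what Schur-type almost orthogonality gives, add an error $\epsilon\|f\|^2$ at every scale. The recursion $a_{k+1}^2\le(1-c)a_k^2+\epsilon\|f\|^2$ then stalls at $a_k^2\approx\epsilon\|f\|^2/c$ and never reaches $h^{2\beta}\|f\|^2$.
\end{itemize}
To fix this, measure the gain against a quantity that is exactly additive over the scale-$L^{-k}$ intervals, and make every loss relative. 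Tails should be charged to the mass near $X$ at coarser scales, with geometric decay in the scale gap. Alternatively, induct on $h$ by rescaling, using cutoffs with compactly supported Fourier transform so that the localized pieces keep Fourier support in a slightly fattened rescaled $Y$.

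These are exactly the two points that disappear in the paper's $p$-adic argument. High and low digits separate exactly, giving $R_{m+k}\le R_mR_k$ with no tails. The one-block estimate concerns finitely many configurations, so compactness is automatic and the Osgood--Siripuram--Wu theorem supplies the uniqueness. Over $\mathbb{R}$ these two points are the substance of the proof, and your plan leaves both open.
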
 
The situation in higher dimensions is more delicate. Ordinary porosity is no longer sufficient, since it does not exclude sets containing entire directions; see also \cite{CohenDiscrete2025} for the corresponding obstruction in the discrete two-dimensional setting. Han and Schlag proved a higher-dimensional Bourgain-Dyatlov theorem when the Fourier-side set has product structure~\cite{HanSchlag2020}.  More recently, Cohen obtained a higher-dimensional result under the stronger assumption of line porosity, using a higher-dimensional Beurling--Malliavin multiplier theorem~\cite{CohenHigherDimFUP}.

The $p$-adic problem considered below has a different obstruction.  It comes instead from the failure over $\mathbb Q_p$ of the real-variable mechanism behind unique continuation.

To state our main theorem, we first introduce the relevant notation and fix our Fourier-analytic normalizations.
We write $|x|_p$ for the usual $p$-adic absolute value, normalized by $|p|_p=p^{-1}$, and, for $r>0$, we let
$B_r=\{x\in\mathbb Q_p:|x|_p\leq r\}$.
In particular, $B_1=\mathbb Z_p$. We use a Haar measure $dx$ on $\mathbb Q_p$, normalized so that $\mathbb Z_p$ has measure one.

Every non-zero $x\in\mathbb Q_p$ has a unique expansion
$$
x=\sum_{j\geq j_0}x_jp^j,
$$
where $j_0\in\mathbb Z$ and $x_j\in\{0,\ldots,p-1\}$. We use the additive character
$$
\chi(x)=\exp\left(2\pi i\sum_{\substack{j<0}}x_jp^j\right).
$$
The sum in the exponent is finite, and $\chi$ is trivial on $\mathbb Z_p$. For $\xi\in \mathbb{Q}_p$, we define the Fourier transform by
$$
\widehat f(\xi)=\int_{\mathbb Q_p}\chi(-x\xi)f(x)dx.
$$
With these normalizations, Fourier inversion and Parseval's identity take the form
$$
f(x)=\int_{\mathbb Q_p}\chi(x\xi)\widehat f(\xi)\,d\xi
$$
and
$$
\int_{\mathbb Q_p}f(x)\overline{g(x)}\,dx
=
\int_{\mathbb Q_p}\widehat f(\xi)\overline{\widehat g(\xi)}\,d\xi.
$$
In particular, the Fourier transform is unitary on $L^2(\mathbb Q_p)$, and
$$
\widehat{\mathbf 1_{\mathbb Z_p}}
=
\mathbf 1_{\mathbb Z_p}.
$$ This already marks a clear difference from the Archimedean setting. Over $\mathbb{R}$, compact Fourier support implies, by the Fourier inversion formula, that $f$ extends to an entire function, and hence a non-zero such function cannot vanish on an interval. This unique continuation phenomenon is a crucial ingredient in the Bourgain-Dyatlov argument. Over $\mathbb{Q}_p$, by contrast, the identity above exhibits a non-zero function which is both compactly supported and has compactly supported Fourier transform. This failure of unique continuation is precisely the obstruction identified by Cohen in his formulation of the $p$-adic fractal uncertainty problem~\cite{Cohen}.

We prove the following $p$-adic fractal uncertainty principle, which resolves a conjecture of Cohen.

\begin{thm}[Fractal uncertainty over $\mathbb{Q}_p$]\label{thm:padic-fup}
Let $p$ be a prime and let $0<\nu<1$.  Then there exist constants
$C=C(p,\nu)<\infty$ and $\beta=\beta(p,\nu)>0$ such that the following holds.
Let $h=p^{-n}$ with $n\ge1$.  Suppose that $X\subset B_1$ is $\nu$-porous
from scales $h$ to $1$, and that $Y\subset B_{h^{-1}}$ is $\nu$-porous from
scales $1$ to $h^{-1}$.  Then, for every $f\in L^2(\mathbb{Q}_p)$ such that
$\supp\widehat f\subseteq Y$, one has
$$
    \|\one_Xf\|_{L^2(\mathbb{Q}_p)}
    \le Ch^\beta\|f\|_{L^2(\mathbb{Q}_p)}.
$$
\end{thm}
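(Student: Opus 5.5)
We first reduce to a finite problem on $\mathbb Z/p^n\mathbb Z$, and then prove the finite statement by induction on scales. For the reduction, we may enlarge $X$ and $Y$ to unions of balls of radius $h$ and $1$ respectively, at the cost of replacing $\nu$ by a slightly smaller constant. This is harmless because $p$-adic balls are nested, so porosity is preserved up to a factor of $p$. Now decompose $f=\sum_{a\in B_{h^{-1}}/B_1} f_a$, where $\widehat{f_a}=\widehat f\,\one_{a+\mathbb Z_p}$, so that each $f_a$ is $\chi(a\cdot)$ times a function that is constant on cosets of $\mathbb Z_p$. Restricting to $x\in B_1$ and sampling on cosets of $B_h$ then identifies $\one_{B_1}f$ with a function on $\mathbb Z/p^n$ whose discrete Fourier transform is supported on the image of $Y$ in $\mathbb Z/p^n$. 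Both images become $\nu$-porous subsets of the $p$-adic tree of depth $n$. Concretely, for each vertex at depth $k$ the set avoids at least one subtree at depth $k+m$, where $m=m(\nu,p)$. By Parseval, the theorem then follows from
$$\|\one_{X_n}\mathcal F_n\one_{Y_n}\|_{\ell^2\to\ell^2}\le Cp^{-\beta n}$$
for such tree-porous sets $X_n,Y_n\subset\mathbb Z/p^n$.

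For the finite statement we use the tree structure. The DFT on $\mathbb Z/p^{n}$ factors Cooley--Tukey style: restricting $x$ to a residue class mod $p^k$ and $\xi$ to a residue class mod $p^{n-k}$ reduces the problem to a twisted DFT on $\mathbb Z/p^{n-k}$ composed with one on $\mathbb Z/p^k$, with phase twists. This gives submultiplicativity of the form
$$r_{n+m}\le r_n\, r_m',$$
where $r$ is the supremum of the operator norm over all porous pairs and all twists by characters. It therefore suffices to show a one-step gain: there is $m$ such that $r_m'<1$ uniformly over porous pairs at depth $m$ and over all twists. Iterating this gives $p^{-\beta n}$ with $\beta=-\log_p(r_m')/m$.

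The main obstacle is this strict inequality $r_m'<1$, precisely because unique continuation fails: $\one_{p^k\mathbb Z}$ has DFT $\one_{p^{n-k}\mathbb Z}$ up to scaling, so norm $1$ is attained on nested subgroup pairs. The plan is to argue by contradiction and compactness, since there are finitely many porous pairs at fixed depth and the twists range over a compact set. Norm $1$ would force some nonzero $g$ with $\supp g\subset X_m$ and $\supp\widehat g\subset Y_m$ (twisted). One then needs a finite uncertainty fact for $\mathbb Z/p^m$. For a function supported on a union of cosets, with Fourier support porous at every level, one shows that the support of $g$ must contain a full coset of some $p^j\mathbb Z$, and the Fourier support must contain the corresponding dual coset. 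This is an analogue of Meshulam's or Tao's uncertainty principle for cyclic $p$-groups: $|\supp g|\cdot|\supp\widehat g|\ge p^m$, with equality only for cosets. Porosity at every scale on both sides forbids this extremal configuration. Since $|X_m|\le (1-p^{-m_0})^{m/m_0}p^m$ and similarly for $Y_m$, a counting argument shows the product of the supports cannot reach $p^m$ once $m$ is large. That rules out the extremizers and yields the strict gap.

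If this counting is too weak, the fallback is an inductive ``mass must escape'' argument along the tree, in the spirit of Dyatlov--Jin's discrete fractal uncertainty principle. At each level, porosity of $Y$ removes a child of every node, and the twisted factorization then shows that a fixed fraction of the $L^2$ mass of $g$ is spread over children that $X$ must miss at some level.
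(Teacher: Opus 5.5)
Your reduction to $G_n$ and the Cooley--Tukey submultiplicativity match what the paper does. The twists are harmless: the phase $\exp(-2\pi i a\eta/p^n)$ is a unitary diagonal factor on the input side that commutes with $\one_A$, so $r_m'=r_m$. You still need to check that fibres and projections of porous sets are porous.

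The gap is in the step you yourself call the crux, the one-scale bound $r_m'<1$. You reduce it to the support inequality $|\supp g|\,|\supp\widehat g|\ge p^m$ together with the claim that $|X_m|\,|Y_m|<p^m$ for large $m$. That claim is false in general. For $p=3$, the set of $x\in G_m$ whose digits all lie in $\{0,1\}$ is $1$-porous, yet $|X||Y|=4^m>3^m$. With your own bounds, $(1-p^{-m_0})^{2m/m_0}p^{2m}<p^m$ would require $(1-p^{-m_0})^2<p^{-m_0}$, which fails except for $p=2$, $m_0=1$, where porous sets have at most one point.

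Whenever $|X||Y|<p^m$, the Hilbert--Schmidt bound $\|\one_Y\F_m\one_X\|\le(|X||Y|/p^m)^{1/2}$ already gives the contraction. So support-size counting only covers the trivial regime of dimension below $1/2$, not the porous sets of dimension close to $1$ that the theorem must handle. Knowing that equality in Meshulam's inequality forces cosets says nothing when the supports are far from extremal. Your claim that $\supp g$ must contain a full coset and $\supp\widehat g$ the dual coset is not proved, and in the form you state it, it is essentially the strict inequality itself. The fallback ``mass must escape'' argument presupposes a quantitative one-level gain, which is exactly what is missing.

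What is needed is a finite substitute for unique continuation that uses \emph{where} the holes of $Y$ lie, not just how many there are. The paper takes $K=1+qL$ with $(p^L-1)^q<p^{(q-1)L}$. Then every $L$-porous $X\subset G_K$ satisfies the one-sided bound $|X|\le p(p^L-1)^q<p^{K-L}$, which holds for every $L$ once $q$ is large.

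Porosity of $Y$ at level $K-L$ gives, in each residue class $a$ modulo $p^{K-L}$, a point $i_a\notin Y$. The set $I=\{i_a\}$ therefore contains exactly one representative of each such class. By the Osgood--Siripuram--Wu theorem, every Fourier minor indexed by $I\times J$ with $|J|=p^{K-L}$ is invertible; its proof uses generalized Vandermonde determinants, Schur polynomials, $\Phi_{p^M}(1)=p$, and a $p$-adic valuation count. Hence if $g=\F_Ku$ with $\supp u\subset X$ and $g|_I=0$, then $|\supp\F_Kg|\le|X|<p^{K-L}$ forces $u=0$.

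Full residue classes are \emph{not} sampling sets; that is the failure of unique continuation you observed. So an arithmetic input of this kind, exploiting one point per class rather than whole classes, is what replaces analyticity here.
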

The proof of Theorem~\ref{thm:padic-fup} passes through a finite fractal
uncertainty principle on the cyclic groups $\mathbb Z/p^n\mathbb Z$; this
finite formulation is also discussed by Cohen. 
We state this result now and prove it implies the main theorem in Section~2.
For each $n\geq1$, let
$$
G_n\defeq \mathbb Z/p^n\mathbb Z.
$$
If $0\leq j\leq n$ and $a\in\mathbb Z/p^j\mathbb Z$, we write
$$
B_j(a)\defeq \{x\in G_n:x\equiv a\pmod {p^j}\}.
$$

\begin{defn}\label{def:L-porous}
Let $L\geq1$.  A set $E\subset G_n$ is called $L$-porous if, whenever
$0\leq j\leq n-L$, every residue class modulo $p^j$ contains a residue
class modulo $p^{j+L}$ that is disjoint from $E$.
\end{defn}

Equivalently, for every $0\leq j\leq n-L$ and every
$a\in\mathbb Z/p^j\mathbb Z$, there is some
$b\in\mathbb Z/p^{j+L}\mathbb Z$ such that $ b\equiv a\pmod{p^j} $ and $B_{j+L}(b)\cap E=\varnothing $.

We write $\mathcal F_n$ for the unitary Fourier transform on $G_n$, defined
by
$$
\mathcal F_nf(\xi) \defeq p^{-n/2}\sum_{x\in G_n}
f(x)e^{-\frac{2\pi i x\xi}{p^n}}.
$$
The finite result we shall prove is the following.

\begin{thm}[Finite $p$-adic fractal uncertainty principle]
\label{thm:finite-fup}
Let $p$ be a prime and let $L\geq1$.  There exist constants
$C=C(p,L)<\infty$ and $\beta=\beta(p,L)>0$ such that, for every $n\geq1$
and every pair of $L$-porous sets $X,Y\subset G_n$, one has
$$
\|\one_Y\mathcal F_n \one_X\|_{\ell^2(G_n)\to\ell^2(G_n)}
\leq Cp^{-\beta n}.
$$
\end{thm}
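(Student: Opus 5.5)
The plan is to prove Theorem~\ref{thm:finite-fup} in two stages: a submultiplicativity statement reducing everything to a single fixed scale, and a qualitative statement that at one fixed scale the norm is strictly below $1$. Define
$$
M(n)\defeq\sup\bigl\{\|\one_Y\mathcal F_n\one_X\|_{\ell^2(G_n)\to\ell^2(G_n)} : X,Y\subset G_n\ \text{$L$-porous}\bigr\}\le 1 .
$$
If I can show $M(n_1+n_2)\le M(n_1)M(n_2)$ and $M(K)<1$ for some $K=K(p,L)$, then writing $n=qK+r$ with $0\le r<K$ gives $M(n)\le M(K)^{q}\le C p^{-\beta n}$, with $\beta=-\log_p M(K)/K$ and $C=M(K)^{-1}$.

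For submultiplicativity I would use a Cooley--Tukey factorisation adapted to the residue-class tree. Write $x=x_1+p^{n_1}x_2$ with $x_1\in G_{n_1}$ (the coarse part, i.e.\ the class mod $p^{n_1}$), $x_2\in G_{n_2}$. Similarly write $\xi=\xi_1+p^{n_2}\xi_2$ with $\xi_1\in G_{n_2}$, $\xi_2\in G_{n_1}$. Modulo $1$,
$$
\frac{x\xi}{p^n}\equiv \frac{x_2\xi_1}{p^{n_2}}+\frac{x_1\xi_2}{p^{n_1}}+\frac{x_1\xi_1}{p^{n}} .
$$
Hence $\one_Y\mathcal F_n\one_X$ factors as a composition of three maps.
\begin{enumerate}
\item For each fixed $x_1\in\pi_{n_1}X$, apply $\one_{\pi_{n_2}Y}\mathcal F_{n_2}\one_{X_{x_1}}$ in the variable $x_2$. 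Here $X_{x_1}\subset G_{n_2}$ is the fibre of $X$ over $x_1$, and $\pi_{n_2}Y$ is the set of residues mod $p^{n_2}$ meeting $Y$.
\item Multiply by the unimodular twist $e^{2\pi i x_1\xi_1/p^n}$.
\item For each fixed $\xi_1$, apply $\one_{Y_{\xi_1}}\mathcal F_{n_1}\one_{\pi_{n_1}X}$ in the variable $x_1$.
\end{enumerate}
Steps (1) and (3) are direct sums of operators, so the norm of the composition is at most the product of the suprema of the block norms. It remains to check that all the sets involved are $L$-porous. Fibres of an $L$-porous set are $L$-porous in the smaller group, since a class mod $p^{n_1+j}$ inside the fibre corresponds to a class mod $p^{n_1+j}$ in $G_n$. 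Projections mod $p^{m}$ are also $L$-porous at depths $j\le m-L$, because an omitted class of depth $j+L\le m$ stays omitted after projection. This yields $M(n_1+n_2)\le M(n_1)M(n_2)$.

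For the base case, $G_K$ is finite and there are finitely many pairs $(X,Y)$, so $M(K)<1$ holds if and only if no nonzero $f$ has $\supp f\subset X$ and $\supp\mathcal F_Kf\subset Y$ with $X,Y$ both $L$-porous. This exact-concentration exclusion is the main obstacle, since it is exactly where the failure of unique continuation enters. The model extremisers are indicators of cosets $a+p^kG_K$ times characters, whose Fourier transforms are supported on cosets of $p^{K-k}G_K$. These are ruled out by porosity once $K\ge 2L$: either $X$ contains a full class mod $p^k$ with $k\le K-L$, or $Y$ contains a full class mod $p^{K-k}$ with $K-k\le K-L$. So the task is to show that every exact concentration pair in $G_K$ forces such coset structure somewhere in the tree.

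My first attempt will be an equality-case analysis of the factorisation above, with $n_1$ equal to one digit (or $L$ digits). If $\|\one_Y\mathcal F_K\one_X\|=1$, equality must hold in every block. This produces exact concentrators for the fibre and projection problems at smaller sizes, down to $G_p$, where I can use Chebotarev's theorem (all minors of the DFT matrix of prime order are nonzero). Combining this with the twist factor should force the supports to be full classes at some depth, contradicting porosity. If that bookkeeping fails, I would fall back on a structure theorem for the supports of functions on cyclic $p$-groups with small Fourier support (Meshulam-type results, where extremal supports are cosets). I would then run an induction on $K$ showing that a support pair avoiding full classes at every depth $\le K-L$ cannot occur.
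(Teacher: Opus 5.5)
Your reduction to a single scale is correct and matches the paper's: the digit-splitting factorisation, the $L$-porosity of fibres and projections, and the conclusion $M(n)\le M(K)^{\lfloor n/K\rfloor}$ all go through. The gap is the base case $M(K)<1$. You flag it as the main obstacle but do not prove it, and the heuristic behind your plan is false. Your plan, including the proposed induction on $K$, rests on the claim that an exact concentration pair must place a full residue class of depth $\le K-L$ inside $X$ or $Y$. Take $p=5$, $L=1$, $K=2$ (so $K\ge 2L$) and $E=G_2\setminus(\{0,5,10,15,20\}\cup\{1,2,3,4\})$, which is $1$-porous with $|E|=16$. With $X=Y=E$, requiring $\mathcal F_2f$ to vanish on $Y^c$ imposes $9$ linear conditions on the $16$-dimensional space of functions supported on $X$. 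So a nonzero exact concentrator exists and $M(2)=1$, yet no full class of depth $\le1$ lies in $X$ or $Y$. More generally, for fixed $K$ and large $p$, porous sets have density close to $1$, so $|X|+|Y|>p^K$ and concentrators exist. Hence $K$ must be taken large in terms of $p$ and $L$, and the argument must exploit this.

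Your equality-case descent cannot do that on its own. Equality in the factorisation only says that the nonzero fibres are exact concentrators at scale $K-1$ and the columns are exact concentrators on $G_1=\mathbb Z/p\mathbb Z$. So it proves that $M(K-1)<1$ or $M(1)<1$ implies $M(K)<1$. That is qualitative submultiplicativity and never yields a first strict inequality; on $G_1$, concentrators exist as soon as $|X|+|Y|\ge p+1$. Everything hinges on the coupling through the twist, for which you give no mechanism. The Meshulam-type fallback does not help either: such results describe the near-extremal regime $|\supp f|\,|\supp\mathcal F_K f|\approx p^K$, whereas porous pairs can have $|X|\,|Y|$ far larger than $p^{K+1}$.

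The idea you are missing is to combine the sparsity of $X$ with the structure of $Y^c$, rather than to classify concentrators. The paper chooses $K=1+qL$ with $(p^L-1)^q<p^{(q-1)L}$, so that every $L$-porous $X\subset G_K$ has $|X|\le p(p^L-1)^q<p^{K-L}$. Porosity of $Y$ at depth $K-L$ puts a point of $Y^c$ in every residue class modulo $p^{K-L}$; call such a transversal $I$. The Osgood--Siripuram--Wu theorem serves as a prime-power substitute for Chebotarev. It says that a set whose counts in the residue classes modulo each $p^s$ differ by at most one, such as $I$, is a universal sampling set: every square minor of the Fourier matrix of $G_K$ with row set $I$ is invertible. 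The balancing hypothesis is essential; for instance, for $K\ge2$ the minor with rows $\{0,p^{K-1}\}$ and columns $\{0,p\}$ is the all-ones matrix. Now suppose $u$ is supported in $X$ and $\mathcal F_Ku$ in $Y$. Then $g=\mathcal F_Ku$ vanishes on $I$ while $|\supp\mathcal F_Kg|=|\supp u|<p^{K-L}=|I|$, which forces $g=0$. Since there are finitely many pairs, this gives $M(K)<1$.
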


To illustrate the hypothesis, let $\mathcal A$ be a proper subset of
$\{0,\ldots,p-1\}$, and let $X\subset G_n$ consist of those elements
$$
x=x_0+x_1p+\cdots+x_{n-1}p^{n-1}
$$
for which $x_j\in\mathcal A$ for every $j$.  Then $X$ is $1$-porous: after
fixing any initial string of digits, one may choose the next digit outside
$\mathcal A$.  Choosing $X$ and $Y$ in this way from two proper digit sets
gives the arithmetic Cantor sets considered by Dyatlov and
Jin~\cite{DyatlovJin2017}.  Theorem~\ref{thm:finite-fup}, however, requires
no self-similarity.  The missing descendant may vary from one residue class
to another and from one scale to the next.

Let us briefly describe the proof of Theorem~\ref{thm:finite-fup}. For each
$n\geq1$, let $R_n$ be the largest restricted Fourier norm between two
$L$-porous subsets of $\mathbb Z/p^n\mathbb Z$. Separating the high and low
$p$-adic digits gives the submultiplicative estimate
$$
R_{m+n}\leq R_mR_n.
$$
It is therefore enough to prove that $R_N<1$ at some single scale
$N=N(p,L)$. This strict inequality follows from a result of Osgood,
Siripuram and Wu on universal sampling sets~\cite{OSW}, which plays here
the role of a finite substitute for the analytic continuation principle
available in the Archimedean setting. Submultiplicativity then amplifies
this fixed-scale improvement into the required exponential decay.

After this work was made public, we learned that Ruxi Shi had independently obtained a proof of the $p$-adic fractal uncertainty principle proved here.

\subsection*{Statement of AI use}
The proof presented in this paper was discovered and written entirely by the authors. Motivated by comparisons between classical uncertainty principles in the Euclidean and $p$-adic settings, we were led to ask whether the failure of unique continuation could be replaced by a finite injectivity statement, in which a function with suitably restricted Fourier support is determined by its values on an appropriate sampling set. Since related questions might have appeared, under different terminology, in either mathematics or engineering, we used deep search tools for academic databases that included a non-reasoning LLM component to scan each article. This search led us to the work of Osgood, Siripuram and Wu~\cite{OSW}, which provides the main external input to our proof.

\subsection*{Acknowledgments}
The authors would like to thank Alex Cohen for interesting discussions and perspectives on the topic. The second author acknowledges support from the Isaac Newton Trust through a Trinity Cambridge Research Studentship and would also like to thank Timothy Gowers for his guidance and support. The second author also thanks Cédric Pilatte for useful discussions on non-backtracking matrices, although the final argument ultimately followed a much simpler route.

%% file: sections/reduction-to-finite.tex
\section{Reduction to the finite $p$-adic fractal uncertainty principle}

In this section we show how Theorem~\ref{thm:padic-fup} follows from
Theorem~\ref{thm:finite-fup}.

\begin{prop}
Theorem~\ref{thm:finite-fup} implies Theorem~\ref{thm:padic-fup}.
\end{prop}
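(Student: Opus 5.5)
Throughout, $n\ge1$ and $h=p^{-n}$. The plan is to discretize. First fatten $X$ to a union of cosets of $p^n\mathbb Z_p$ and $Y$ to a union of cosets of $\mathbb Z_p$. Then show that a function with $\supp\widehat f\subseteq Y$, restricted to $\mathbb Z_p$, is essentially a function on $G_n$. Finally, identify the operator $f\mapsto \one_X f$ with $\one_{X'}\mathcal F_n\one_{Y'}$ (or its adjoint) for suitable $L$-porous sets $X',Y'\subset G_n$, and apply Theorem~\ref{thm:finite-fup}. The norm is unchanged under adjoints and inverse/forward transforms, since $\overline{\mathcal F_n}$ is a reflection of $\mathcal F_n$ and reflection preserves porosity.

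\textbf{Step 1 (porosity transfer).} Let $\tilde X=X+p^n\mathbb Z_p$, viewed as a subset of $G_n=\mathbb Z_p/p^n\mathbb Z_p$. We show $\tilde X$ is $L$-porous for some $L=L(p,\nu)$. Fix a residue class $a+p^j\mathbb Z_p$ with $0\le j\le n-L$; it is a ball of diameter $p^{-j}$. Since $p$-adic diameters are discrete, we apply the $\nu$-porosity at a radius $R$ slightly below $p^{-j}$. This is allowed because $R\in(h,1)$, using $j\le n-L$ and $L\ge 2$ (the endpoint cases need a slight shrink). It gives a ball $B(x,\nu R)$ inside the class that misses $X$. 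That ball contains a class modulo $p^{j+L'}$ with $p^{-(j+L')}\le \nu R$, so $L'\approx \log_p(1/\nu)+2$. Taking $L\ge L'+1$, a class modulo $p^{j+L}$ inside it misses even the fattening $X+p^n\mathbb Z_p$, since $j+L\le n$ and fattening by $p^n\mathbb Z_p$ does not leave a class modulo $p^{k}$ with $k\le n$. The same argument applies to $Y$. Since $Y\subset B_{p^n}=p^{-n}\mathbb Z_p$, we write $\eta=p^{-n}\zeta$ with $\zeta\in\mathbb Z_p$, fatten by $\mathbb Z_p$ (that is, $\zeta$ modulo $p^n$), and get an $L$-porous $\tilde Y\subset G_n$. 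The bookkeeping of scales at the endpoints is the main fiddly point. I expect to handle it by enlarging $L$ by a constant, and by allowing the porosity hypothesis to be used with open or closed ranges as needed.

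\textbf{Step 2 (reduction of the operator).} Set $g=\widehat f$, so $\supp g\subseteq \tilde Y^\sharp:=\bigcup_{b\in\tilde Y}(p^{-n}b+\mathbb Z_p)$. By duality,
\[
\|\one_X f\|_{L^2}\le \|\one_{\tilde X^\sharp}\mathcal F^{-1}\one_{\tilde Y^\sharp}\|\,\|f\|_{L^2},
\]
so it suffices to bound the operator $T=\one_{\tilde X^\sharp}\mathcal F^{-1}\one_{\tilde Y^\sharp}$, where $\tilde X^\sharp=\bigcup_{a\in\tilde X}(a+p^n\mathbb Z_p)$. Decompose $L^2(\tilde Y^\sharp)=\bigoplus_{b}L^2(p^{-n}b+\mathbb Z_p)$. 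For $\xi=p^{-n}b+\zeta$ with $\zeta\in\mathbb Z_p$ and $x\in\mathbb Z_p$, we have $\chi(x\xi)=\chi(p^{-n}bx)\chi(x\zeta)$. The first factor depends only on $x$ mod $p^n$ and on $b$ mod $p^n$, and equals $e^{2\pi i \tilde a b/p^n}$ up to the sign convention relating $\chi$ to $e^{2\pi i\cdot/p^n}$. Now expand $g$ on each coset $p^{-n}b+\mathbb Z_p$ in the orthonormal characters $\chi(c\zeta)$, $c\in\mathbb Q_p/\mathbb Z_p$. The $\mathbb Z_p$-Fourier transform of each piece, restricted to $x\in a+p^n\mathbb Z_p$, shows that $T$ is unitarily equivalent to a direct sum over the fine structure of copies of the matrix
\[
\bigl(p^{-n/2}e^{2\pi i ab/p^n}\one_{\tilde X}(a)\one_{\tilde Y}(b)\bigr)_{a,b\in G_n}.
\]
More concretely, the kernel factorizes as (the $G_n$ matrix) $\otimes$ (a unitary on $L^2(p^n\mathbb Z_p)\leftrightarrow L^2(\mathbb Z_p)$ after rescaling). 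The normalization $p^{-n/2}$ arises from the measures $|p^n\mathbb Z_p|=p^{-n}$. I expect to verify this factorization by writing $x=a+p^n t$, which turns $\int_{\mathbb Z_p}\chi(x\zeta)\,\cdot\,d\zeta$ into a unitary map in $(t,\zeta)$ with the $a$-dependence absorbed by a modulation. Making that modulation independent of $b$ is the step to check carefully; I would rather compose with the phase $\chi(a\zeta)$, which is unitary on each fibre.

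\textbf{Conclusion.} Hence $\|T\|=\|\one_{\tilde X}\overline{\mathcal F_n}\one_{\tilde Y}\|$, or the same with $\tilde Y$ replaced by $-\tilde Y$, which is still $L$-porous. Theorem~\ref{thm:finite-fup} bounds this by $Cp^{-\beta n}=Ch^\beta$, with $C,\beta$ depending on $p$ and $L=L(p,\nu)$, which gives Theorem~\ref{thm:padic-fup}.
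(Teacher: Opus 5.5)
Your Step~1 is essentially the paper's porosity transfer. The paper avoids the ``radius slightly below $p^{-j}$'' device, which is awkward because $p$-adic diameters lie in $p^{\mathbb Z}$. Instead it applies porosity to the class itself when $j\ge1$, and to a child of diameter $p^{-1}$ when $j=0$, with $L$ chosen so that $\nu\ge p^{1-L}$.

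The problem is in Step~2: the mechanism you describe is wrong as stated. Since $\chi$ is trivial on $\mathbb Z_p$, write $x=a+p^nt$ and $\xi=p^{-n}b+\zeta$ with $t,\zeta\in\mathbb Z_p$. Then $x\xi-p^{-n}ab=tb+a\zeta+p^nt\zeta\in\mathbb Z_p$, hence $\chi(x\xi)=e^{2\pi iab/p^n}$ exactly. There is no fine-scale phase or modulation at all, and the kernel of $T$ is constant on each cell $(a+p^n\mathbb Z_p)\times(p^{-n}b+\mathbb Z_p)$. Thus $T=M\otimes J$, where $M=\bigl(e^{2\pi iab/p^n}\one_{\tilde X}(a)\one_{\tilde Y}(b)\bigr)_{a,b\in G_n}$ and $J\colon L^2(\mathbb Z_p)\to L^2(p^n\mathbb Z_p)$ is the rank-one map $h\mapsto\bigl(\int_{\mathbb Z_p}h\bigr)\one_{p^n\mathbb Z_p}$. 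In your character expansion, every mode $\chi(c\zeta)$ with $c\neq0$ in $\mathbb Q_p/\mathbb Z_p$ is annihilated. So the fibre factor is not a unitary, and $T$ is not a direct sum of copies of the normalized matrix; it is one copy of $p^{-n/2}M$ plus zero.

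The conclusion nevertheless survives. We have $\|M\otimes J\|=\|M\|\,\|J\|$ and $\|J\|=\|\one_{p^n\mathbb Z_p}\|_2\|\one_{\mathbb Z_p}\|_2=p^{-n/2}$, so $\|T\|=\|\one_{\tilde X}\overline{\mathcal F_n}\one_{\tilde Y}\|$, as you claim. Concretely, set $G(b)=\int_{p^{-n}b+\mathbb Z_p}\widehat f$. Then $\|\one_{\tilde X^\sharp}f\|_{L^2}=\|\one_{\tilde X}\overline{\mathcal F_n}\one_{\tilde Y}G\|_{\ell^2}$, and Cauchy--Schwarz on each unit-measure coset gives $\|G\|_{\ell^2}\le\|\widehat f\|_{L^2}=\|f\|_{L^2}$. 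Since $\mathcal F_n$ is a symmetric matrix, $\overline{\mathcal F_n}=\mathcal F_n^*$, so no reflection is needed. With this correction your argument is complete, and the worry about making the modulation independent of $b$ disappears.

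Compared with the paper: the paper truncates in physical space, setting $g=\one_{B_1}f$, so that $\widehat g=\one_{B_1}*\widehat f$ is supported in $Y+B_1$. It then shows that $g$ is constant on cosets of $B_h$ and $\widehat g$ on cosets of $B_1$, and identifies $(g,\widehat g)$ isometrically with $(F,\mathcal F_nF)$ on $G_n$. Your coset averages $G(b)$ are exactly the values of that $\widehat g$ on $p^{-n}b+B_1$, so the two proofs are dual presentations of the same computation. The paper works with a modified function and an exact isometry, at the cost of the constancy-on-cosets argument. Your operator-level version reads the discretization directly off the kernel and replaces that argument with a tensor-norm (or Cauchy--Schwarz) step.
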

\begin{proof}
Fix $0<\nu<1$, and choose $L\geq 1$ such that $\nu\geq p^{1-L}$. Let $h=p^{-n}$, and let $X\subset B_1$ and $Y\subset B_{h^{-1}}$ satisfy the assumptions of Theorem~\ref{thm:padic-fup}.

Let $X_h=X+B_h$ and $Y_1=Y+B_1$. Thus, $X_h$ is the union of the balls of radius $h$ which meet $X$, and $Y_1$ is the union of the balls of radius $1$ which meet $Y$.

Let $f\in L^2(\mathbb Q_p)$ satisfy $\supp\widehat f\subseteq Y$, and put $g=\one_{B_1}f$. Since $X\subset B_1$, we have $\one_Xf=\one_Xg$. Moreover, using $\widehat{\one_{B_1}}=\one_{B_1}$, since $B_1=\Z_p$, we obtain $\widehat g=\one_{B_1}*\widehat f$, and hence $\supp\widehat g\subseteq Y+B_1=Y_1$. Notice also that $Y_1\subseteq B_{h^{-1}}$. It is therefore enough to prove the required estimate for functions supported in $B_1$ whose Fourier transform is supported in $Y_1$.

We identify $B_1/B_h$ with $G_n=\mathbb Z/p^n\mathbb Z$ by sending $a\in G_n$ to $a+B_h$, and identify $B_{h^{-1}}/B_1$ with $G_n$ by sending $b\in G_n$ to $p^{-n}b+B_1$. Let $X_n,Y_n\subseteq G_n$ be the images of $X_h$ and $Y_1$ under these identifications.

We claim that $X_n$ and $Y_n$ are $L$-porous. There is nothing to prove if $n<L$, so suppose that $n\geq L$. Let $0\leq j\leq n-L$, and consider a residue class modulo $p^j$ in $G_n$. On the physical side this corresponds to a ball $D\subset B_1$ of radius $p^{-j}$. If $j\geq1$, then $h<p^{-j}<1$, so the porosity of $X$ gives inside $D$ a ball disjoint from $X$ of radius $\nu p^{-j}$. Since $p^{-(j+L)}\leq\nu p^{-j}$, this ball contains a ball of radius $p^{-(j+L)}$ disjoint from $X$. If $j=0$, we first choose a ball of radius $p^{-1}$ inside $B_1$. Porosity gives inside it a ball disjoint from $X$ of radius $\nu p^{-1}$, and our choice of $L$ gives $\nu p^{-1}\geq p^{-L}$.

In either case we have found, inside the original residue class, a residue class modulo $p^{j+L}$ which is disjoint from $X$. It is in fact disjoint from $X_h$. Indeed, it has radius at least $h$, and if it met a ball of radius $h$ which met $X$, then that ball would be contained in it, contradicting its disjointness from $X$. Thus $X_n$ is $L$-porous.

The same argument applies to $Y_n$. A residue class modulo $p^j$ on the Fourier side corresponds to a ball of radius $p^{n-j}$. If $j\geq1$, this radius lies strictly between $1$ and $h^{-1}$, and the porosity of $Y$ gives a disjoint ball containing a ball of radius $p^{n-j-L}$. If $j=0$, we first pass to a ball of radius $p^{n-1}$; the inequality $\nu\geq p^{1-L}$ then gives a disjoint ball of radius at least $p^{n-L}$. Since $j+L\leq n$, the ball obtained has radius at least $1$, and is therefore disjoint from $Y_1=Y+B_1$. Hence $Y_n$ is also $L$-porous.

Now let $g$ be supported in $B_1$ and suppose that $\supp\widehat g\subseteq Y_1$. Since $Y_1\subseteq B_{h^{-1}}$, the function $g$ is constant on cosets of $B_h$. Indeed, if $y\in B_h$, then $\chi(y\xi)=1$ for every $\xi\in B_{h^{-1}}$, and therefore $\widehat{g(\,\cdot+y\,)}=\widehat g$. Fourier inversion gives $g(\,\cdot+y\,)=g$. Similarly, since $g$ is supported in $B_1$, its Fourier transform is constant on cosets of $B_1$.

Define $F:G_n\to\mathbb C$ by $F(a)=p^{-n/2}g(x)$ for $x\in a+B_h$. Since each coset of $B_h$ has measure $p^{-n}$, this identification is an isometry:
$$
\|F\|_{\ell^2(G_n)}=\|g\|_{L^2(\mathbb Q_p)}.
$$
If $\xi\in p^{-n}b+B_1$, then $\chi(-x\xi)=e^{-2\pi iab/p^n}$ for $x\in a+B_h$. It follows that
\begin{align*}
\widehat g(\xi)
&=\sum_{a\in G_n}\int_{a+B_h}\chi(-x\xi)g(x)\,dx \\
&=p^{-n/2}\sum_{a\in G_n}F(a)e^{-2\pi iab/p^n}
=\mathcal F_nF(b).
\end{align*}
Thus $\supp\mathcal F_nF\subseteq Y_n$, and
\begin{equation}\label{eq:X_h-X:n-Q_p-G_n}
\|\one_{X_h}g\|_{L^2(\mathbb Q_p)}
=
\|\one_{X_n}F\|_{\ell^2(G_n)}.    
\end{equation}

Since $\mathcal F_nF$ is supported in $Y_n$ and $\mathcal F_n$ is unitary, we have
$$
\one_{X_n}F
=
\one_{X_n}\mathcal F_n^*1_{Y_n}\mathcal F_nF.
$$
Therefore,
\begin{align*}
\|\one_{X_n}F\|_{\ell^2(G_n)}
&\leq
\|\one_{X_n}\mathcal F_n^*\one_{Y_n}\|_{\ell^2\to\ell^2}
\|F\|_{\ell^2(G_n)} \\
&=
\|\one_{Y_n}\mathcal F_n \one_{X_n}\|_{\ell^2\to\ell^2}
\|F\|_{\ell^2(G_n)}.
\end{align*}
Applying Theorem~\ref{thm:finite-fup}, equation~(\ref{eq:X_h-X:n-Q_p-G_n}), and the isometry, we obtain
$$
\|\one_{X_h}g\|_{L^2(\mathbb Q_p)}
\leq
C(p,L)p^{-\beta(p,L)n}\|g\|_{L^2(\mathbb Q_p)}.
$$
Since $h=p^{-n}$, and since $L$ depends only on $p$ and $\nu$, this gives constants $C=C(p,\nu)$ and $\beta=\beta(p,\nu)>0$ such that
$$
\|\one_Xf\|_{L^2(\mathbb Q_p)}
\leq
\|\one_{X_h}g\|_{L^2(\mathbb Q_p)}
\leq
Ch^\beta\|f\|_{L^2(\mathbb Q_p)}.
$$
This proves Theorem~\ref{thm:padic-fup}.
\end{proof}

%% file: sections/proof-of-finite.tex
\section{Proof of Theorem \ref{thm:finite-fup}}

Throughout this section, we fix a prime $p$ and an integer $L\geq 1$. For each $n\geq 1$ we now define
$$
    R_n \defeq \max_{\substack{X,Y\subset G_n\\ X,Y\text{ are }L\text{-porous}}}
    \|\one_Y\F_n\one_X\|_{\ell^2(G_n)\to\ell^2(G_n)}.
$$
When $n<L$, the porosity condition is vacuous, and hence $R_n=1$. We will
prove that the sequence $(R_n)$ is submultiplicative. For self-similar
discrete Cantor sets, Dyatlov and Jin derived the corresponding
submultiplicative estimate from the exact product structure of the sets;
see~\cite{DyatlovJin2017}. General $L$-porous sets need not have such a product
structure, but their fibres and projections remain $L$-porous, which allows
the same Fourier factorization to be used here.

\subsection{Separating high and low digits}

Write $n=k+m$, with $k,m\geq 1$. Every $x,\xi\in G_n$ can be written as
$$
x = a+p^k b,\quad \xi = \eta + p^m d,
$$
where the choice of $a,d\in G_k$ and $b,\eta\in G_m$ is unique.

For a function $f$ defined on $G_n$, let
$$
f_a(b) \defeq f(a+p^kb),
$$
for any $a\in G_k$ and $b\in G_m$. A direct calculation yields the following factorization property:
\begin{equation}\label{eq:fourier-factorization}
(\F_nf)(\eta+p^md)
=p^{-k/2}\sum_{a\in G_k}
\exp\left(-\frac{2\pi iad}{p^k}\right)
\exp\left(-\frac{2\pi ia\eta}{p^n}\right)
(\F_mf_a)(\eta).
\end{equation}

Choosing the low digits of $x$ and the high digits of $\xi$ makes it so that the first exponential in \eqref{eq:fourier-factorization} is exactly the Fourier kernel of $G_k$, while the second is a unitary diagonal factor that will not affect the estimate below. We are now ready to prove submultiplicativity.

\begin{lem}\label{lem:submultiplicativity}
For all integers $m,k\geq 1$, we have
$$
R_{m+k}\leq R_m R_k.
$$
\end{lem}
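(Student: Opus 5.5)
Fix $n=k+m$ and $L$-porous sets $X,Y\subset G_n$, and let $f$ be supported in $X$. The plan is to realize $\one_Y\F_n\one_X$ as a composition of two operators: one acting in the high physical digits $b$ with a small frequency variable $\eta$, and one acting in the low physical digits $a$ with the large frequency $d$. Each operator will be bounded by $R_m$ and $R_k$ respectively.

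\textbf{Step 1 (fibres and projections).} For $a\in G_k$ let $X_a=\{b\in G_m: a+p^kb\in X\}$ be the fibre of $X$ over $a$, and let $\pi_k(X)\subset G_k$ be the reduction of $X$ mod $p^k$. Write $\xi=\eta+p^md$ on the frequency side. Let $Y'=\{\eta\in G_m:\exists d,\ \eta+p^md\in Y\}$ be the reduction of $Y$ mod $p^m$, and for $\eta\in G_m$ let $Y^\eta=\{d\in G_k:\eta+p^md\in Y\}$.

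I first check that all of these are $L$-porous.
\begin{itemize}
\item For the fibre $X_a$: a residue class mod $p^j$ in $G_m$ is $\{b\equiv c\}$, which corresponds to the class of $a+p^kc$ mod $p^{k+j}$ in $G_n$. Porosity of $X$ at level $k+j\le n-L$ gives a class mod $p^{k+j+L}$ missing $X$, whose preimage is a class mod $p^{j+L}$ missing $X_a$.
\item For the projection $\pi_k(X)$: a class mod $p^j$ with $j\le k-L$ contains a class mod $p^{j+L}$ disjoint from $X$ in $G_n$. Its reduction mod $p^k$ is a union of full classes, disjoint from $\pi_k(X)$, because membership mod $p^{j+L}$ only depends on the low digits.
\item The same two arguments apply to $Y'$ (a projection, since reduction mod $p^m$ reads the low digits) and to $Y^\eta$ (a fibre).
\end{itemize}

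\textbf{Step 2 (inner estimate).} By \eqref{eq:fourier-factorization}, set $g(a,\eta)=e^{-2\pi i a\eta/p^n}(\F_m f_a)(\eta)$. Then
\[
(\F_nf)(\eta+p^md)=(\F_k g(\cdot,\eta))(d).
\]
Since $f_a$ is supported in $X_a$, we get
\[
\|\one_{Y'}\F_mf_a\|\le R_m\|f_a\|.
\]
Here $f_a=0$ unless $a\in\pi_k(X)$. The phase factor is unimodular and does not change norms. Summing over $a$ gives
\[
\sum_{a,\eta\in Y'}|g(a,\eta)|^2\le R_m^2\|f\|^2.
\]

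\textbf{Step 3 (outer estimate).} For each $\eta\in Y'$, the function $g(\cdot,\eta)$ is supported in $\pi_k(X)$, and the output is restricted to $d\in Y^\eta$. Hence
\[
\sum_{d\in Y^\eta}|(\F_n f)(\eta+p^md)|^2\le R_k^2\sum_a|g(a,\eta)|^2.
\]
For $\eta\notin Y'$ the set $Y^\eta$ is empty, so there is nothing to bound. Summing over $\eta$ and combining with Step 2 gives
\[
\|\one_Y\F_n\one_Xf\|\le R_kR_m\|f\|,
\]
which is the claim after taking the maximum over $X$ and $Y$.

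\textbf{Main obstacle.} The delicate point is Step 1. One must check that porosity survives passage to fibres and projections at the correct levels ($j\le m-L$ for the fibre, $j\le k-L$ for the projection), and that the low/high digit roles on the physical and frequency sides match the factorization exactly.
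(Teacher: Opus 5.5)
Your proposal is correct and is essentially the paper's proof: you use the same fibres $X_a$, $Y^\eta$ and projections $\pi_k(X)$, $Y'$ (the paper's $A$ and $B$), the same porosity checks at levels $j\le m-L$ and $j\le k-L$, and the same two-stage bound, first by $R_m$ for each $a$ and then by $R_k$ for each $\eta$. The only cosmetic difference is that you absorb the unimodular phase $e^{-2\pi i a\eta/p^n}$ into $g(a,\eta)$, whereas the paper writes it as a diagonal unitary $M_\eta$ that commutes with $\one_A$.
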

\begin{proof}
Let $X,Y\subset G_{m+k}$ be $L$-porous, and suppose that $f$ is supported on $X$.
For $a\in G_k$ and $\eta\in G_m$, set
$$
X_a\defeq \{b\in G_m:a+p^kb\in X\},
\qquad
Y_\eta\defeq\{d\in G_k:\eta+p^md\in Y\}.
$$
Also let
$$
A\defeq \{a\in G_k:X_a\neq\varnothing\},
\qquad
B\defeq \{\eta\in G_m:Y_\eta\neq\varnothing\}.
$$
The sets $A$ and $Y_\eta$ are $L$-porous subsets of $G_k$, and the sets
$X_a$ and $B$ are $L$-porous subsets of $G_m$. We prove this for $A$ and
$X_a$, since the other case is analogous.

Fix $a\in G_k$. To show that $X_a\subset G_m$ is $L$-porous, let
$0\leq j\leq m-L$ and $\beta\in\mathbb Z/p^j\mathbb Z$. Then
$$
\{a+p^kb:\hspace{3pt}b\equiv\beta\pmod{p^j}\}
=
B_{k+j}(a+p^k\beta)
\subset G_{m+k}.
$$
Since $X$ is $L$-porous, there exists
$c\in\mathbb Z/p^{k+j+L}\mathbb Z$ such that
$$
c\equiv a+p^k\beta\pmod{p^{k+j}}
$$
and
$$
B_{k+j+L}(c)\cap X=\varnothing.
$$
Since $c\equiv a\pmod{p^k}$, we may write $c=a+p^k\gamma$. Then
$\gamma\equiv\beta\pmod{p^j}$ and
$$
B_{j+L}(\gamma)\cap X_a=\varnothing.
$$
Thus $X_a$ is $L$-porous.

For $A\subset G_k$, fix $0\leq j\leq k-L$ and
$\alpha\in\mathbb Z/p^j\mathbb Z$. By the $L$-porosity of $X$, there exists
$\alpha'\in\mathbb Z/p^{j+L}\mathbb Z$ such that
$$
\alpha'\equiv\alpha\pmod{p^j}
$$
and
$$
B_{j+L}(\alpha')\cap X=\varnothing.
$$
Suppose there exists $a\in A$ (which by definition means $X_a\neq \varnothing$) such that
$a\equiv\alpha'\pmod{p^{j+L}}$. Then, since $j+L\leq k$, there exists $b\in G_m$ such that
$a+p^kb\in X$, while
$$
a+p^kb\equiv\alpha'\pmod{p^{j+L}},
$$
which is in contradiction with the previous assertion we obtain from the $L$-porosity of $X$. Thus $A$ is $L$-porous.

For each $a\in G_k$, let
$$
v_a\defeq \one_B\F_mf_a.
$$
By the definition of $R_m$, we have
\begin{equation}\label{eq:first-contraction}
\sum_{a\in G_k}\|v_a\|_2^2
\leq
R_m^2\sum_{a\in G_k}\|f_a\|_2^2
=
R_m^2\|f\|_2^2.
\end{equation}
Fix $\eta\in B$. Formula~\eqref{eq:fourier-factorization} says that, as a
function of $d\in Y_\eta$, the restricted transform is
$$
\one_{Y_\eta}\F_kM_\eta\one_A
\big(v_a(\eta)\big)_{a\in G_k},
$$
where
$$
(M_\eta w)(a)
=
\exp\left(-\frac{2\pi ia\eta}{p^n}\right)w(a).
$$
The operator $M_\eta$ is diagonal and unitary, and it commutes with
$\one_A$. Therefore
$$
\|\one_{Y_\eta}\F_kM_\eta\one_A\|_{2\to2}
=
\|\one_{Y_\eta}\F_k\one_A\|_{2\to2}
\leq R_k.
$$
Hence
$$
\sum_{d\in Y_\eta}
|(\F_nf)(\eta+p^md)|^2
\leq
R_k^2\sum_{a\in G_k}|v_a(\eta)|^2.
$$
Summing over $\eta\in B$ and using \eqref{eq:first-contraction}, we obtain
$$
\|\one_Y\F_nf\|_2^2
\leq
R_k^2R_m^2\|f\|_2^2.
$$
Taking the supremum over $f$, $X$, and $Y$ proves the lemma.
\end{proof}

\subsection{Universal sampling sets}

We next recall a result on Fourier minors that we will use to obtain a strict contraction on a single block.  If $M\geq 1$, $I\subset G_M$ and $0\le s\le M$, write
$$
\chi_s(a;I)\defeq
\bigl|\{x\in I:x\equiv a\pmod{p^s}\}\bigr|,
\qquad a\in\mathbb Z/p^s\mathbb Z.
$$

\begin{thm}[Osgood--Siripuram--Wu]\label{thm:OSW}
Let $M\geq 1$, $I\subset G_M$.  The following are equivalent:
\begin{enumerate}
\item For every $0\le s\le M$ and every $a,b\in\Z/p^s\Z$,
$$
|\chi_s(a;I)-\chi_s(b;I)|\le1.
$$
\item For every $J\subset G_M$ with $|J|=|I|$, the square Fourier matrix
$$
\big(\exp(-2\pi ixy/p^M)\big)_{x\in I,y\in J}
$$
is invertible.
\end{enumerate}
\end{thm}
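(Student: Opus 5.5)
The plan is to first recast condition (2) as an uncertainty statement for the group $G_M$. For $J$ with $|J|=|I|$, the matrix $(e^{-2\pi ixy/p^M})_{x\in I,y\in J}$ is singular exactly when there is a nonzero $f$ supported in $J$ whose transform $\mathcal F_Mf$ vanishes on $I$. So (2) is equivalent to the following claim:
\begin{equation*}
f\neq0,\quad \mathcal F_Mf|_I=0 \ \Longrightarrow\ |\supp f|\geq |I|+1 .
\end{equation*}
Both directions are then statements about the zero sets of Fourier transforms of functions with small support on $\mathbb Z/p^M\mathbb Z$. They can be studied through the chain of subgroups $p^{M-s}G_M$, $0\le s\le M$, which are the only subgroups of $G_M$.

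For (2)$\Rightarrow$(1) I argue by contraposition. Suppose there are $s$ and $a,b\in\mathbb Z/p^s\mathbb Z$ with $\chi_s(a;I)\geq\chi_s(b;I)+2$. The aim is a nonzero $f$ with small support whose transform vanishes on $I$. Take the class $b\bmod p^s$, which contains few points of $I$, and the $p^{M-s}$ points of $G_M$ in that class. The building blocks are the modulated functions $y\mapsto e^{2\pi i cy/p^M}\one_{p^{M-s}G_M}(y)$, whose transforms are supported on a single residue class modulo $p^s$. Taking linear combinations of such blocks, together with block functions living at finer scales inside that class, I will produce an $f$ whose transform vanishes on all of $I$. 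The count is governed by $\chi_s(b;I)$ rather than by the larger $\chi_s(a;I)$. I then check that $|\supp f|\le|I|$, and the imbalance of at least $2$ is exactly what leaves room for this. Padding the support to a set $J$ of size $|I|$ gives a singular minor. I expect this direction to be mostly bookkeeping once the correct extremal $f$ is identified.

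For (1)$\Rightarrow$(2) I proceed by induction on $M$, using the factorization \eqref{eq:fourier-factorization} with $k=1$. This splits $f$ into its $p$ fibres $f_a$ modulo $p$, and splits $I$ according to its top digit. The base of the argument is algebraic. A relation $\sum_y c_y\zeta^{y}=0$ with $\zeta$ a primitive $p^M$-th root of unity holds exactly when the polynomial $\sum c_yT^y$ is divisible by $\Phi_{p^M}(T)=\sum_{r<p}T^{rp^{M-1}}$. This is the de Bruijn / Lam--Leung description of vanishing sums of $p$-power roots of unity. It will show that vanishing of $\mathcal F_Mf$ on a set forces $f$ to be balanced along cosets of $p^{M-1}G_M$, which in turn yields a lower bound on the number of nonzero fibres. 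Combining this with the inductive hypothesis, applied to each fibre and to the projected set, should give $|\supp f|\geq |I|+1$. At each level, the balance condition (1) supplies the needed inequality between the fibre counts of $I$ and the number of points that the vanishing conditions force into $\supp f$.

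I expect the main obstacle to be the (1)$\Rightarrow$(2) direction. Condition (1) couples all scales simultaneously, not just fibre by fibre, so the induction must carry a stronger hypothesis than (1) restricted to each fibre. I would first try to strengthen the statement to a sharp support bound of the form
\begin{equation*}
|\supp f|\geq \min_{J} \#\{\text{points forced by } I\},
\end{equation*}
expressed through the multiset of counts $\chi_s(\cdot;I)$, and prove that sharp bound by induction. This is the route of Osgood--Siripuram--Wu, and in the paper the result is simply quoted from~\cite{OSW}.
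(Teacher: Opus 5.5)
\textbf{There is a genuine gap in your (1)$\Rightarrow$(2) plan, and (2)$\Rightarrow$(1) is not carried out.} Your reformulation of (2), namely that $f\neq0$ and $\mathcal F_Mf|_I=0$ imply $|\supp f|\ge|I|+1$, is correct.

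The (1)$\Rightarrow$(2) plan, however, rests on a false step. For complex coefficients, $\sum_y c_y\zeta^y=0$ does \emph{not} imply $\Phi_{p^M}(T)\mid\sum_y c_yT^y$. That equivalence is the irreducibility of $\Phi_{p^M}$ over $\mathbb Q$, and it needs rational coefficients. For $P\in\mathbb C[T]$, $P(\zeta)=0$ only gives $(T-\zeta)\mid P$. A kernel vector $f$ of a Fourier minor is complex (at best it has entries in $\mathbb Q(\zeta)$, over which $\Phi_{p^M}$ splits). So vanishing of $\mathcal F_Mf$ on $I$ forces no balance. For example, on $\mathbb Z/4\mathbb Z$ the function $f=\delta_0+i\delta_1$ satisfies $\mathcal F_2f(3)=\tfrac12(1+i\cdot i)=0$, yet $f$ is not constant on the cosets of $\{0,2\}$.

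Moreover, you never formulate the strengthened inductive statement, which you yourself identify as the crux. Applying the hypothesis to each fibre separately loses the coupling between the fibres of a single $f$, and the bound $|I|+1$ needs exactly that coupling. Your remark that this is the route of~\cite{OSW} is also mistaken.

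The paper puts the integrality where it actually holds: on $\det V$, not on a kernel vector.
\begin{itemize}
\item With $z_y=\omega^{j_y}$, the matrix $V=(z_y^{m_x})$ is a generalized Vandermonde matrix, and $\det V=\prod_{u<v}(z_v-z_u)\,S(z_1,\dots,z_d)$ with $S$ a Schur polynomial with integer coefficients.
\item If $\det V=0$, then $s(x)=S(x^{j_1},\dots,x^{j_d})\in\mathbb Z[x]$ vanishes at $\omega$. Hence $\Phi_{p^M}\mid s$, and so $p\mid S(1,\dots,1)=\prod_{u<v}(m_v-m_u)/\prod_{u<v}(v-u)$.
\item Condition (1) is used only to show that this integer is prime to $p$. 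Counting pairs in common classes via $\sum_a\binom{\chi_s(a;I)}{2}$, the valuation of the numerator depends only on the multisets $\{\chi_s(a;I)\}_a$. Under (1) these coincide with the multisets for $\{0,\dots,d-1\}$.
\end{itemize}

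The paper does not prove (2)$\Rightarrow$(1); it quotes it from~\cite{OSW}. Your sketch of this direction contains no construction, and the blocks you name cannot do it alone. A function whose transform lies in one class modulo $p^s$ has modulus constant on cosets of $p^{M-s}G_M$, so its support has at least $p^s$ points, which may exceed $|I|$. For $p=3$, $M=2$, $I=\{0,3\}$ one has $|I|=2<3=p^s$. Yet $J=\{0,3\}$ gives a singular minor, coming from $f=\delta_0-\delta_3$, whose transform vanishes on the \emph{heavy} class $0\bmod 3$.

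A dimension count does work.
\begin{itemize}
\item Suppose $\chi_s(a;I)\ge\chi_s(b;I)+2$ and put $r=\chi_s(b;I)+1$. Since $r<\chi_s(a;I)\le p^{M-s}$, you may take $U=\bigcup_{i=1}^r(y_i+p^{M-s}G_M)$ with the $y_i$ distinct modulo $p^{M-s}$.
\item Let $\omega=e^{-2\pi i/p^M}$ and $y=y_i+p^{M-s}t$. The column $(\omega^{xy})_{x\in I}$ equals $(\omega^{xy_i}e^{-2\pi ixt/p^s})_{x\in I}$, and the second factor is constant on each class $x\equiv\eta\pmod{p^s}$.
\item Hence, on the rows of class $\eta$, the columns indexed by $U$ span at most $\min(\chi_s(\eta;I),r)$ dimensions. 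In total they span at most $\sum_\eta\min(\chi_s(\eta;I),r)$ dimensions.
\item This sum is at most $|I|-1$, with the saving at $\eta=a$. It is also at most $|U|-1$, with the saving at $\eta=b$.
\item Choose $J$ with $|J|=|I|$, taking $J\subset U$ if $|U|\ge|I|$ and $J\supset U$ otherwise. Then the minor on $I\times J$ is singular.
\end{itemize}
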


This is Theorem~4 of~\cite{OSW}.  In their terminology, condition (2) says that $I$ is a universal sampling set.
For our purposes, this result provides a finite substitute for unique continuation: a function with sufficiently small Fourier support is determined by its values on a suitable sampling set. This is made precise in Lemma~\ref{lem:sampling-consequence}. 

We provide a self-contained proof of the implication $(1)\implies(2)$,
which is the one used in this paper. The proof follows the argument in
Theorem~4 of~\cite{OSW}, using Lemmas~1 and~2 there.

\begin{proof}[Proof of Theorem~\ref{thm:OSW}: $(1)\implies(2)$]
Let $\omega\defeq\exp(-2\pi i/p^M)$. We choose representatives
$0\leq m_1<\cdots<m_d<p^M$ for the elements of $I$ and write
$I=\{m_1,\ldots,m_d\}$. We assume that condition~$(1)$ holds. Fix a set
$J=\{j_1,\ldots,j_d\}\subset G_M$ of cardinality $d$. We want to show
that the matrix $V=(\omega^{m_xj_y})_{x,y=1}^d$ is invertible.

We set $z_y=\omega^{j_y}$ for $1\leq y\leq d$. The matrix
$V=(z_y^{m_x})_{x,y=1}^d$ is a generalized Vandermonde matrix.
Lemma~1 of~\cite{OSW} gives a Schur polynomial $S$
such that
$$
\det V
=
\left(\prod_{1\leq u<v\leq d}(z_v-z_u)\right)
S(z_1,\ldots,z_d).
$$
We now suppose for contradiction that $\det V=0$. Then
$S(z_1,\ldots,z_d)=0$. We set
$s(x)\defeq S(x^{j_1},\ldots,x^{j_d})\in\Z[x]$. Since $s(\omega)=0$,
the cyclotomic polynomial $\Phi_{p^M}$ divides $s$. Evaluating at
$x=1$ and using $\Phi_{p^M}(1)=p$, we obtain
$p\mid S(1,\ldots,1)$.

The same lemma also gives
$$
S(1,\ldots,1)
=
\frac{
\prod_{1\leq u<v\leq d}(m_v-m_u)
}{
\prod_{1\leq u<v\leq d}(v-u)
}.
$$
We show that this integer is not divisible by $p$. Denote the numerator in the preceding formula by
$A_I\defeq\prod_{1\leq u<v\leq d}(m_v-m_u)$. For every
$1\leq s\leq M$, the condition $p^s\mid m_v-m_u$ holds precisely when
$m_u$ and $m_v$ belong to the same residue class modulo $p^s$.
Since the residue class $a$ contains $\chi_s(a;I)$ elements of $I$,
the number of such pairs is
$$
\sum_{a=0}^{p^s-1}\binom{\chi_s(a;I)}{2}.
$$
Separating the pairs according to the exact power of $p$ dividing
$m_v-m_u$, we obtain
$$
v_p(A_I)
=
\sum_{s=1}^{M-1}
s\left[
\sum_{a=0}^{p^s-1}\binom{\chi_s(a;I)}{2}
-
\sum_{a=0}^{p^{s+1}-1}\binom{\chi_{s+1}(a;I)}{2}
\right].
$$

We now compare $I$ with the set
$\widetilde I=\{0,1,\ldots,d-1\}$. For each $0\leq s\leq M$, write
$d=t_sp^s+r_s$, where $0\leq r_s<p^s$. Since the numbers
$\chi_s(a;I)$ differ by at most one and their sum is $d$, exactly
$r_s$ residue classes modulo $p^s$ contain $t_s+1$ elements of $I$,
while the remaining $p^s-r_s$ classes contain $t_s$ elements.
The same is true for $\widetilde I$. The formula above therefore gives $v_p(A_I)=v_p(A_{\widetilde I})$.

Finally, $A_{\widetilde I}=\prod_{1\leq u<v\leq d}(v-u)$, and hence
$$
v_p\bigl(S(1,\ldots,1)\bigr)
=
v_p(A_I)-v_p(A_{\widetilde I})
=
0.
$$
This contradicts $p\mid S(1,\ldots,1)$. Therefore $\det V\neq0$, and
$V$ is invertible.
\end{proof}

We only need the following special case. The sets below are called
$s$-elementary in~\cite{OSW}, and Lemma~6 there shows that they are
universal sampling sets.

\begin{cor}\label{cor:elementary-universal}
Let $M\geq 1$ and $0\le s\le M$.  Suppose that $I\subset G_M$ contains exactly one representative of every residue class modulo $p^s$.  Then $|I|=p^s$, and $I$ is a universal sampling set.
\end{cor}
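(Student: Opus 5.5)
The plan is to verify condition (1) of Theorem~\ref{thm:OSW} for $I$ and then apply the implication $(1)\implies(2)$ proved above. The cardinality claim is immediate. The residue classes modulo $p^s$ partition $G_M$, and $I$ meets each of them in exactly one point, so $|I|$ equals the number of classes, namely $p^s$.

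Next I would compute $\chi_t(a;I)$ for every $0\le t\le M$ and every $a\in\Z/p^t\Z$, splitting into two cases.

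\emph{Case $t\le s$.} Each residue class modulo $p^t$ is the disjoint union of exactly $p^{s-t}$ residue classes modulo $p^s$. Each of those contains exactly one element of $I$, so $\chi_t(a;I)=p^{s-t}$ for every $a$. Hence $|\chi_t(a;I)-\chi_t(b;I)|=0$.

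\emph{Case $t>s$.} Two distinct elements of $I$ lie in distinct classes modulo $p^s$, so they cannot be congruent modulo $p^t$, since that would force congruence modulo $p^s$. Therefore every class modulo $p^t$ contains at most one element of $I$, so $\chi_t(a;I)\in\{0,1\}$. Hence $|\chi_t(a;I)-\chi_t(b;I)|\le1$.

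In both cases condition (1) holds, so Theorem~\ref{thm:OSW} gives that every square Fourier minor with rows indexed by $I$ and columns indexed by any $J$ with $|J|=p^s$ is invertible. That is, $I$ is a universal sampling set.

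There is no real obstacle here. The only point needing care is the case $t>s$, which uses the nesting of residue classes: a congruence modulo $p^t$ implies the congruence modulo $p^s$ whenever $t\ge s$. The edge cases $s=0$ (where $I$ is a single point) and $s=M$ (where $I=G_M$) are covered by the same computation.
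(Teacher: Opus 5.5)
Your proposal is correct and matches the paper's proof: you verify condition (1) of Theorem~\ref{thm:OSW} by showing $\chi_t(a;I)=p^{s-t}$ for $t\le s$ and $\chi_t(a;I)\in\{0,1\}$ for $t>s$, then apply $(1)\implies(2)$. The paper leaves two details implicit that you spell out: the count $|I|=p^s$, and the fact that residue classes are nested in the case $t>s$.
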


\begin{proof}
If $r\le s$, then every residue class modulo $p^r$ contains exactly $p^{s-r}$ elements of $I$.  Thus
$$
\chi_r(a;I)=p^{s-r}
$$
for every $a\in\Z/p^r\Z$.  If $r>s$, then every residue class modulo $p^r$ contains at most one element of $I$, and hence
$$
\chi_r(a;I)\in\{0,1\}.
$$
The first condition in Theorem~\ref{thm:OSW} therefore holds at every level, and the result follows.
\end{proof}

We will use the following consequence.

\begin{lem}\label{lem:sampling-consequence}
Let $M\geq 1$, let $0\leq s\leq M$, and let $I\subset G_M$
contain exactly one representative of every residue class modulo $p^s$.
Suppose that $g:G_M\to\mathbb C$ satisfies
$$
g|_I=0,
\qquad
|\operatorname{supp}\F_Mg|\leq p^s.
$$
Then $g=0$.
\end{lem}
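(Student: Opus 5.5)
We will deduce Lemma~\ref{lem:sampling-consequence} from Corollary~\ref{cor:elementary-universal} by writing $g$ through Fourier inversion and reading the hypothesis $g|_I=0$ as a linear system whose coefficient matrix is a square Fourier minor.

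First we fix notation. Let $J_0\defeq\supp\F_Mg$, so $|J_0|\le p^s=|I|$. Enlarge $J_0$ arbitrarily to a set $J\subset G_M$ with $|J|=p^s$; this is possible since $p^s\le p^M$. Then $\F_Mg$ is supported in $J$.

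Next we apply Fourier inversion on $G_M$. For every $x\in G_M$,
$$
g(x)=p^{-M/2}\sum_{y\in J}(\F_Mg)(y)\,e^{2\pi i xy/p^M}.
$$
Restricting to $x\in I$ and using $g|_I=0$, the vector $w=\big((\F_Mg)(y)\big)_{y\in J}$ lies in the kernel of the square matrix
$$
\overline{V}=\big(\exp(2\pi ixy/p^M)\big)_{x\in I,\,y\in J}.
$$
This matrix is the entrywise complex conjugate of
$$
V=\big(\exp(-2\pi ixy/p^M)\big)_{x\in I,\,y\in J},
$$
so $\det\overline V=\overline{\det V}$.

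Now invoke Corollary~\ref{cor:elementary-universal}. Since $I$ contains exactly one representative of each residue class modulo $p^s$, it is a universal sampling set with $|I|=p^s=|J|$. Hence $V$ is invertible, and therefore so is $\overline V$. It follows that $w=0$, so $\F_Mg=0$, and $g=0$ by unitarity of $\F_M$.

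The only points needing care are the padding of the support to exactly $|I|$ elements, which condition~(2) of Theorem~\ref{thm:OSW} requires, and the sign convention in the exponent. The conjugation handles the sign. Alternatively, one can note that $J\mapsto -J$ preserves cardinality and apply (2) to $-J$. No real obstacle is expected.
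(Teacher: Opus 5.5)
Your proposal is correct and follows essentially the same route as the paper: both pad $\operatorname{supp}\F_Mg$ to a set $J$ of size $p^s$, write $(g(i))_{i\in I}$ via Fourier inversion as $p^{-M/2}\overline{V_{I,J}}$ applied to $\bigl((\F_Mg)(j)\bigr)_{j\in J}$, and then use Corollary~\ref{cor:elementary-universal} to get invertibility of the minor. The same invertibility argument then forces $\F_Mg=0$ and hence $g=0$. Your explicit handling of the sign convention by conjugation is exactly what the paper does implicitly.
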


\begin{proof}
Let $\omega\defeq\exp(-2\pi i/p^M)$. Choose a set $J\subset G_M$ of cardinality $p^s$ such that
$\operatorname{supp}\F_Mg\subseteq J$. By
Corollary~\ref{cor:elementary-universal}, the set $I$ is a universal
sampling set. Therefore, by Theorem~\ref{thm:OSW}, the Fourier minor
$V_{I,J}\defeq(\omega^{ij})_{i\in I,\,j\in J}$ is invertible. Fourier
inversion gives
$$
\bigl(g(i)\bigr)_{i\in I}
=
p^{-M/2}\overline{V_{I,J}}
\bigl((\F_Mg)(j)\bigr)_{j\in J}.
$$
Since $g|_I=0$ and $\overline{V_{I,J}}$ is invertible, we have
$(\F_Mg)(j)=0$ for every $j\in J$. Together with
$\operatorname{supp}\F_Mg\subseteq J$, this gives $\F_Mg=0$, and hence
$g=0$.
\end{proof}
\subsection{A strict contraction on one block}

We now choose the length of the block. Let $q\geq 1$ be a sufficiently large integer such that
\begin{equation}\label{eq:q-choice}
(p^L-1)^q<p^{(q-1)L},
\end{equation}
and define $K\defeq 1+qL$. Such a $q$ exists because
$p^L(1-p^{-L})^q\longrightarrow0$ as $q\to\infty$.

\begin{lem}\label{lem:cardinality}
Let $K=1+qL$. If $E\subset G_K$ is $L$-porous, then
$$
|E|\leq p(p^L-1)^q<p^{K-L}.
$$
\end{lem}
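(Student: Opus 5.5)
We will bound $|E|$ by counting residue classes level by level, using the porosity condition only at the levels $j=1,1+L,1+2L,\dots,1+(q-1)L$, all of which satisfy $0\le j\le K-L$.

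For $0\le i\le q$, let $N_i$ be the number of residue classes modulo $p^{1+iL}$ that meet $E$. Trivially $N_0\le p$, since there are only $p$ residue classes modulo $p$.

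The key step is the recursion $N_{i+1}\le (p^L-1)N_i$ for $0\le i<q$. A class modulo $p^{1+(i+1)L}$ meeting $E$ must lie inside a class modulo $p^{1+iL}$ meeting $E$. Each class modulo $p^{1+iL}$ splits into exactly $p^L$ classes modulo $p^{1+(i+1)L}$. Since $j=1+iL\le 1+(q-1)L=K-L$, Definition~\ref{def:L-porous} applies at level $j$: at least one of these $p^L$ subclasses is disjoint from $E$. Hence at most $p^L-1$ of them meet $E$, which gives the recursion.

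Iterating, $N_q\le p(p^L-1)^q$. Since $1+qL=K$, residue classes modulo $p^K$ are single points of $G_K$, so $|E|=N_q\le p(p^L-1)^q$.

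For the strict inequality, multiply \eqref{eq:q-choice} by $p$:
$$
p(p^L-1)^q<p^{1+(q-1)L}=p^{K-L}.
$$

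There is no real obstacle. The only point needing care is that the starting level is $j=1$ rather than $j=0$, which is what keeps the largest level used, $1+(q-1)L$, within the admissible range $j\le K-L$ and matches the exponent in \eqref{eq:q-choice}.
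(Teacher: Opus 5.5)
Your proof is correct and follows the same route as the paper. You start from at most $p$ occupied classes modulo $p$, use $L$-porosity at the levels $1,1+L,\dots,1+(q-1)L$ to discard at least one of the $p^L$ subclasses in each block, and then multiply \eqref{eq:q-choice} by $p$; your explicit check that these levels satisfy $j\le K-L$ is a detail the paper leaves implicit.
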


\begin{proof}
At depth $1$, there are at most $p$ occupied balls. Below each such ball, every block of $L$ further levels has at most $p^L-1$ occupied descendants, since $L$-porosity excludes at least one of them. Iterating through the remaining $qL$ levels gives
$$
|E|\leq p(p^L-1)^q.
$$
Since $K-L=1+(q-1)L$, inequality~\eqref{eq:q-choice} gives
$|E|<p^{K-L}$.
\end{proof}

We first prove a strict contraction on one block. Once this is done,
submultiplicativity gives the full result by a standard argument; see,
for instance, \cite{DyatlovJin2017} in the discrete setting and
\cite{HanSchlag2020} in the continuous setting.

\begin{prop}\label{prop:block-contraction}
Let $K=1+qL$. There exists $0<\rho<1$, depending only on $p$ and $L$, such that
$$
\|\one_Y\F_K\one_X\|_{2\to2}\leq\rho
$$
for every pair of $L$-porous sets $X,Y\subset G_K$.
\end{prop}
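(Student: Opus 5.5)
The strategy is a compactness argument: first show that each individual operator norm is strictly less than one, then take a maximum over finitely many cases. For fixed $K$ there are only finitely many pairs $(X,Y)$ of subsets of $G_K$. So it suffices to show that $\|\one_Y\F_K\one_X\|_{2\to2}<1$ for each fixed pair of $L$-porous sets. We then set $\rho$ to be the maximum of these finitely many norms; it depends only on $p$ and $K=1+qL$, hence only on $p$ and $L$.

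Since $\F_K$ is unitary, we have $\|\one_Y\F_K\one_X\|\le1$. Moreover, in finite dimensions the norm is attained. So if the norm equalled $1$, there would be a nonzero $f$ with $\supp f\subseteq X$ and $\|\one_Y\F_Kf\|_2=\|f\|_2=\|\F_Kf\|_2$. This forces $\supp\F_Kf\subseteq Y$. Thus it is enough to show that no nonzero $f$ on $G_K$ satisfies both $\supp f\subseteq X$ and $\supp\F_Kf\subseteq Y$.

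To rule this out we apply Lemma~\ref{lem:sampling-consequence} with $M=K$ and $s=K-L$. By Lemma~\ref{lem:cardinality}, $|\supp\F_Kf|\le|Y|<p^{K-L}=p^s$. It remains to find a set $I\subset G_K$ that contains exactly one representative of every residue class modulo $p^{K-L}$ and is disjoint from $X$; then $f|_I=0$ and the lemma gives $f=0$. We construct $I$ from the $L$-porosity of $X$ with $j=K-L$, which is admissible since $K-L\le K-L$. Every residue class $a$ modulo $p^{K-L}$ contains a residue class modulo $p^{K}$, that is, a single point of $G_K$, disjoint from $X$. Choosing one such point for each $a$ gives the required $I$.

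The only real subtlety is matching the two sides: the sampling set must have exactly $p^s$ elements with one per class, while the Fourier support must have size at most $p^s$. This is precisely why $K$ was chosen via \eqref{eq:q-choice}, so that Lemma~\ref{lem:cardinality} yields the strict bound $|Y|<p^{K-L}$. It also uses that porosity at the deepest admissible level $j=K-L$ produces point-sized holes in every class modulo $p^{K-L}$. Once this is observed, the rest is routine. Note that we only need $|Y|\le p^{K-L}$, so the strict inequality in Lemma~\ref{lem:cardinality} is more than enough.
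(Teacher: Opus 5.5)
Your proof is correct and is essentially the paper's argument with the roles of $X$ and $Y$ swapped. You build the sampling set $I$ from the point-holes of $X$ and apply Lemma~\ref{lem:sampling-consequence} directly to $f$, using $|Y|<p^{K-L}$. The paper instead builds $I$ from the holes of $Y$ and applies the lemma to $\F_K u$, using $\F_K^2u(x)=u(-x)$ and $|X|<p^{K-L}$; your version is marginally more direct since it avoids this reflection step.
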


\begin{proof}
Fix $L$-porous sets $X,Y\subset G_K$. For every $a\in\Z/p^{K-L}\Z$, porosity applied to
$B_{K-L}(a)$ gives a ball of depth $K$ contained in $B_{K-L}(a)$ and
disjoint from $Y$. Such a ball is a singleton; denote its unique point
by $i_a$. Set
$$
I=\{i_a:a\in\Z/p^{K-L}\Z\}.
$$
Then $I\subset Y^c$ contains exactly one representative of every residue
class modulo $p^{K-L}$. By Corollary~\ref{cor:elementary-universal}, it is
a universal sampling set of cardinality $p^{K-L}$.

We claim that
$$
\|\one_Y\F_K\one_X\|_{2\to2}<1.
$$
If equality held, finite-dimensionality would give a non-zero function $u$
supported on $X$ such that $\|\one_Y\F_Ku\|_2=\|u\|_2$. Since $\F_K$ is
unitary, this implies that $\F_Ku$ is supported on $Y$.

Put $g=\F_Ku$. Since $I\subset Y^c$, we have $g|_I=0$. Moreover,
$\F_Kg(x)=u(-x)$, and hence
$$
|\operatorname{supp}\F_Kg|
=
|\operatorname{supp}u|
\leq |X|
<
p^{K-L},
$$
where the final inequality follows from Lemma~\ref{lem:cardinality}.
Lemma~\ref{lem:sampling-consequence} now gives $g=0$, and therefore $u=0$,
a contradiction.

Thus the restricted norm is strictly smaller than one for each pair $X,Y$.
There are only finitely many pairs of subsets of $G_K$, so the maximum of
these norms over all $L$-porous pairs is some number $0<\rho<1$.
\end{proof}

\subsection{Completion of the proof}

We can now finish the proof of Theorem~\ref{thm:finite-fup}. Write
$n=tK+r$, where $0\leq r<K$. Using submultiplicativity over the $t$ blocks of length $K$ and bounding
the remaining block by $1$, we obtain
$R_n\leq R_K^t\leq\rho^t$.
Since $t\geq n/K-1$, this gives
$
R_n\leq\rho^{-1}\rho^{n/K}.
$
Set
$$
C=\rho^{-1},
\qquad
\beta=-\frac{\log\rho}{K\log p}>0.
$$
Then
$$
R_n\leq Cp^{-\beta n},
$$
which is exactly the assertion of Theorem~\ref{thm:finite-fup}.